\documentclass[letterpaper, 10 pt, conference]{ieeeconf}  

\IEEEoverridecommandlockouts                              
\usepackage{amsfonts}
\usepackage{cellspace} 
\usepackage{diagbox} 
\usepackage{amsmath}
\usepackage{amssymb}
\usepackage{mathtools}
\usepackage{tikz}
\usetikzlibrary{chains}
\usepackage{listings}
\usepackage{subcaption}
\usepackage{float}
\usepackage{algorithm}
\usepackage[noend]{algorithmic}
\usepackage{dsfont}
\usepackage{hyperref}
\usepackage[most]{tcolorbox}
\usepackage{mathabx}
\usepackage{nicefrac}
\usepackage{algorithmic}

\allowdisplaybreaks

\DeclareMathOperator*{\argmin}{arg\,min}

\def\ddefloop#1{\ifx\ddefloop#1\else\ddef{#1}\expandafter\ddefloop\fi}

    \def\ddef#1{\expandafter\def\csname c#1\endcsname{\ensuremath{\mathcal{#1}}}}
    \ddefloop ABCDEFGHIJKLMNOPQRSTUVWXYZ\ddefloop

    \def\ddef#1{\expandafter\def\csname s#1\endcsname{\ensuremath{\mathsf{#1}}}}
    \ddefloop ABCDEFGHIJKLMNOPQRSTUVWXYZ\ddefloop

    \def\ddef#1{\expandafter\def\csname b#1\endcsname{\ensuremath{\mathbb{#1}}}}
    \ddefloop ABCDEFGHIJKLMNOPQRSTUVWXYZ\ddefloop
    
\newtheorem{thm}{Theorem}
\newtheorem{lem}[thm]{Lemma}

\newtheorem{assum}[thm]{Assumption}
\newtheorem{defn}[thm]{Definition}

\newtheorem{remark}{Remark}

\def\argmin{\operatornamewithlimits{arg\,min}}

\title{\LARGE \bf
Steady-state Based Approach to Online Non-stochastic Control
}

\author{Vijeth Hebbar, Spencer Hutchinson, Mahnoosh Alizadeh and C\'edric Langbort 
\thanks{Vijeth Hebbar and C\'edric Langbort acknowledge funding support from the ARO MURI grant W911NF-20-0252 (76582 NSMUR). Spencer Hutchinson and Mahnoosh Alizadeh acknowledge funding support from NSF grant \#2330154.}
\thanks{Vijeth Hebbar and C\'edric Langbort are with the Coordinated Science Lab, University of Illinois at Urbana–Champaign,          Urbana, IL 61801, USA.
         {\tt\small \{vhebbar2,langbort\}@illinois.edu}}
\thanks{Spencer Hutchinson and Mahnoosh Alizadeh are with the 
Department of Electrical and Computer Engineering, University of California, Santa Barbara, Santa Barbara, CA 93106 USA.
        {\tt\small \{shutchinson,alizadeh\}@ucsb.edu}} %
}

\begin{document}

\maketitle
\thispagestyle{empty}
\pagestyle{empty}

\begin{abstract}
    We study the problem of \textit{online non-stochastic control} (ONC), which is the control of a linear system under adversarial disturbances and adversarial cost functions, with the aim of minimizing the total cost incurred.
    A recent line of literature in ONC develops algorithms that enjoy sublinear regret with respect to a benchmark based on the set of steady-states that are attainable by a constant input.
    In this work, we extend this research direction by giving an algorithm that enjoys $\cO(\sqrt{T})$ regret with respect to a richer benchmark set, namely the set of steady-states attainable under an \emph{affine controller}.
    Since this benchmark substantially broadens the comparison class, it provides significantly 
    stronger performance guarantees.
    Our proposed algorithm combines a Follow-The-Perturbed-Leader-style online non-convex optimization approach with a batching method that maintains stability despite changing policies.
    Although our proposed algorithm requires solving non-convex subproblems, we show that an approximate solution to this subproblem is sufficient to ensure $\cO(\sqrt{T})$ regret.
    Furthermore, numerical experiments show that our algorithm enjoys lower total cost and similar computation to existing methods in certain settings.    
\end{abstract}

\section{Introduction}
Recent years have seen growing interest in problems that lie at the intersection of online learning and control, motivated by the need to make sequential decisions in dynamic and uncertain situations \cite{li2021online,shi_online_2020,jiang_online_2025,nonhoff_online_2024}. Within this broad interface, \emph{online non-stochastic control} (ONC) has emerged as a powerful paradigm for controlling systems in environments that may be unpredictable or even adversarial \cite{agarwal_online_2019,hazan_introduction_2025}. In its canonical form, ONC considers a learner tasked with controlling a \emph{fully-observable} linear time-invariant system when facing a sequence of unknown convex costs and disturbances, both of which may be picked adversarially. Variants of this framework have since been developed to address settings with partial observability \cite{simchowitz_improper_2020}, time-varying dynamics \cite{gradu2023adaptive} and other extensions \cite{hazan_introduction_2025}.

In this paper, however, we focus on the canonical state feedback setting and develop a novel online control algorithm for it. Specifically, we consider the following linear time-invariant system:
\begin{equation}
    \label{eq:dyn}
    x_{t+1} = A x_t + B u_t + w_t,
\end{equation}
where $x_t \in \bR^N$ is the \textit{state}, $u_t \in \bR^M$ is the \textit{input}, and $w_t \subseteq \bR^N$ is the \textit{disturbance}. 
At each time step, the learner selects an input $u_t$, the system evolves according to \eqref{eq:dyn}, and the learner incurs a convex cost $f_t(x_t)$.\footnote{For ease of exposition, we take the stage cost to depend only on the state. This simplifies the notation without affecting the core idea of our novel method. 
}  The objective is to choose the sequence of inputs $\{u_t\}$ so as to minimize the cumulative cost
\begin{equation} \label{eq:cumu-cost}
 \cJ = \sum_{t=1}^T f_t(x_t).
\end{equation}
The key challenge is that when choosing $u_t$, the learner does not know the current or future costs and disturbances. That is, the learner must act at time $t$ without prior knowledge of $\{f_\tau\}_{\tau \ge t}$ or $\{w_\tau\}_{\tau \ge t}$. As a result, one cannot hope to compute the truly optimal controller for \eqref{eq:cumu-cost} in advance. Instead, performance is evaluated through the notion of \emph{policy regret}, which compares the cumulative cost incurred by the learner against that of the best controller in a specified benchmark class, chosen in hindsight.

This setup was first considered in the seminal work of Agarwal et al. \cite{agarwal_online_2019}, who developed an online control algorithm that achieves sublinear \emph{regret} with respect to the performance of the best-in-hindsight \emph{linear feedback controller}. Since then, this benchmark has remained central in much of the ONC literature, owing to its natural relevance in many classical control settings. It has also continued to underpin a number of subsequent extensions of the framework, including settings with safety constraints \cite{jiang_online_2025,li2021online}, unknown dynamics \cite{cassel_rate-optimal_2022,minasyan_online_2022}, and other considerations  \cite{cassel2020bandit,agarwal_logarithmic_2019}. 

In our recent work \cite{hebbar_revisiting_2025}, however, we took a different viewpoint and argued that, when the learner faces general convex costs that are not necessarily quadratic, benchmarks based on linear feedback controllers can be less interpretable. Motivated by this, we proposed an alternative benchmark based on the set of steady states achievable by constant inputs. In a simplified setting where the system matrix $A$ is stable, this set takes the form
\begin{equation}
    \cX = \{x\in \bR^N | x = Ax + Bu,\; u\in \bR^M\} \label{eq:ss_set_OL}
\end{equation}
and then the the resulting notion of regret can be defined as
\begin{equation}
    \cR_{\cX}(T) = \sum_{t=1}^T f_t (x_t) - \min_{x\in \cX} \sum_{t=1}^T f_t (x). \label{eq:regret-ss-OL}
\end{equation}
This viewpoint yielded a computationally inexpensive online control method with sublinear regret guarantees, while also giving the benchmark a direct operational meaning for general convex costs. Effectively, it compares the learner against the performance of an idealized controller that instantly stabilizes the corresponding \emph{undisturbed} system at the best steady-state of the system in hindsight. Beyond its interpretability and simplicity, this approach also performed favorably in empirical simulations relative to the \emph{disturbance action controller} (DAC)-based approach of Agarwal et al. \cite{agarwal_online_2019}. 

Despite these positives, 
the steady-state set used to define the benchmark in \eqref{eq:regret-ss-OL}  may be overly restrictive. First, this set can be much smaller than the ambient state space; indeed, it should be clear from \eqref{eq:ss_set_OL} that its dimension is fundamentally limited by the dimension of the input, making the benchmark especially restrictive in underactuated settings. Second, it only considers steady states that can be maintained by constant inputs in the open-loop dynamics, and therefore excludes stationary points that become achievable once one allows for feedback.  These limitations naturally motivate the main question we study in the present paper: ``can this steady-state viewpoint be extended to a richer comparison class?''

We answer this question in the affirmative. Specifically, we enlarge the comparison class from steady-states achievable under constant-inputs to the richer set of steady-states achievable under affine stabilizing controllers of the form $u = -Kx+v$. 



\subsection{Other related literature} 

A closely related line of work is online feedback optimization (OFO), where one designs a controller so that the closed-loop system tracks the solution of a (possibly time-varying) optimization problem \cite{hauswirth_optimization_2024,menta_stability_2018,colombino_online_2018}. At a high level, our approach shares with this literature the idea of steering the system toward specific steady-states (be they generated as solutions of an optimization problem or not), and the resulting algorithms similarly rely on the stability of dynamics to ensure that the physical state can track these `target' states. However, the setting we study is fundamentally different in that the sequence of cost functions is not known a priori and may be chosen adversarially. Consequently, rather than analyzing convergence or tracking error to the optimizer of a known program, we evaluate performance through a meaningful notion of regret.

It is also worth noting that several works in the online non-stochastic control literature consider alternative notions of regret. A particularly important line of work studies \emph{dynamic regret}, where the performance of an online controller is compared against that of the optimal controller with full knowledge of future costs and disturbances. Such a benchmark is clearly stronger, but obtaining meaningful guarantees with respect to them typically requires additional problem structure, such as access to a finite horizon of future costs \cite{li_online_2019}, or restriction to quadratic costs \cite{karapetyan_online_2023}.  
Other works consider alternate notions of dynamic regret, for example by comparing against the sequence of pointwise optimal steady-states \cite{nonhoff_online_2021} or against a clairvoyant controller that is restricted to pick a linear feedback gain at every time \cite{zhou_safe_2023}. The resulting dynamic regret bounds in all these works are typically instance-dependent, often scaling with the total variation in the optimal trajectory. In contrast, our objective in this paper is to obtain instance-independent static regret guarantees for the learning algorithms we propose.

\subsection{Paper Roadmap} 
In Section \ref{sec:setup}, we formalize our novel regret notion where the benchmark set consists of steady-states achievable under stabilizing affine controllers. In Section \ref{sec:batching_algo}, we present an online control algorithm developed to achieve good performance with respect to this newly defined regret notion. In Section \ref{sec:regret-result}, we provide formal guarantees on the regret performance of the proposed algorithm. Finally, in Section \ref{sec:simulations} we present some simulation results comparing the empirical performance of our proposed approach vis-a-vis existing methods.    

\section{Problem Setup} \label{sec:setup}

We work with the discrete-time system outlined in \eqref{eq:dyn} where $A$ and $B$ are \emph{known} real matrices with appropriate dimensions. Consistent with the broader literature on ONC, we also assume that the pair $(A,B)$ is stabilizable. Additionally, for the remainder of the paper, we make the following simplifying assumption.

\begin{assum}[No disturbances] \label{ass:no-disturbance} 
    The disturbances $\{w_t\}$ acting on our system in \eqref{eq:dyn} satisfy $w_t=0$ for all $t$.
\end{assum}

At first glance, Assumption~\ref{ass:no-disturbance} may appear restrictive, especially since the ONC framework is designed to accommodate adversarial disturbances. However, for the fully-observable problems we consider here, this assumption is largely without loss of generality. As elaborated in Appendix \ref{app:ext-non-zero-costs}, the effect of disturbance on the state can be separated out by using a superposition argument, allowing the original problem to be recast in terms of an equivalent disturbance-free \emph{nominal} system with a modified sequence of cost functions. We therefore begin by presenting our algorithm and  regret results in this disturbance-free setting, where the core idea of our approach is most transparent. We will then re-introduce disturbances in the simulations using a minor modification to our approach, also  described in Appendix \ref{app:ext-non-zero-costs}.

At each time step $t \in [T]$, a \textit{learner} first chooses a control input $u_t$, and then the \emph{convex} cost function $f_t : \bR^N \rightarrow \bR$ and next state $x_{t+1}$ are revealed. The sequence of 
cost functions $\{ f_t \}_{t\in [T]}$ can be arbitrary (subject to the assumptions in the Section \ref{sec:assump}). However, we do assume that 
they are fixed in advance, and therefore do not depend on the inputs. This places our setup in the class of problems with an \emph{oblivious} adversary and \emph{full-feedback} on costs. 

\subsection{Notion of Regret}

In this paper, we evaluate the performance of an online control policy relative to the best-in-hindsight stable affine control law, i.e., controllers of the form $u_t = - K x_t + v$ with stabilizing $K$. More precisely, our notion of \emph{regret} compares the learner’s cumulative cost against the cost at the best steady-state achievable, in hindsight, by some controller from this class.

To formalize this notion of regret, we first introduce the notion of strongly-stabilizing controllers: 

\begin{defn}[Strongly stabilizing controllers \cite{agarwal_online_2019}]
\label{defn:strong_stab_K}
Fix $\kappa \geq 0$ and $\gamma \in (0,1]$. A matrix
$K \in \bR^{M\times N}$ is called $(\gamma,\kappa)$-strongly stabilizing if there exist matrices $H,J \in \bR^{N\times N}$ such that
\[
    A-BK = H^{-1}JH,
\]
with
\(
    \|J\| \leq 1-\gamma,
    \qquad
    \|H\|\,\|H^{-1}\| \leq \kappa.
\)
We denote by $\cK$ the set of all such controllers. 
\end{defn}
We use $\|\cdot\|$ to denote both the spectral norm of a matrix and the 2-norm of a vector. Note that any stabilizing controller is $(\kappa,\gamma)$-strongly stable for some $\gamma$ and $\kappa$ \cite{cohen_online_2018}. Thus, strong stability is merely a quantitative refinement of stability where the constants $\kappa,\gamma$ guarantee a specific rate of convergence for the closed-loop system $x_{t+1} = (A - B K) x_t$.

For a fixed controller $K\in\cK$, the set of steady states that can be maintained by affine control laws with offsets $v$ being picked from a \emph{convex} input set $\cU$ can be defined as 
\begin{equation}
    \cX(K) \triangleq \left\{ x \in \bR^N| x = (A-BK) x + Bv, v\in \cU \right\} \label{eq:ss_set_fix_K}.
\end{equation}
This is a direct generalization of the steady-state benchmark considered in \cite{hebbar_revisiting_2025}. Indeed, in the special case where the open-loop system $A$ is already stable, taking $K=0$ recovers the earlier set in \eqref{eq:ss_set_OL}. By allowing the feedback gain $K$ to vary over the class $\cK$, however, we obtain a substantially richer collection of attainable steady states. This motivates the following definition.

\begin{defn}[Closed Loop Steady State Manifold] \label{def:ss-set-cl}
        \begin{flalign*}
            \cZ & \triangleq \bigcup_{K\in \cK} \cX(K) \\
            & = \left\{ z \in \bR^N : z = (A - B K) z + B v, K \in \cK, v \in \cU \right\}
        \end{flalign*}
\end{defn}
In words, $\cZ$ is simply the set of all closed-loop steady states that are attainable under affine strongly stabilizing controllers. Note that the set $\cZ$ is guaranteed to be non-empty by our assumption on the stabilizablity of $(A,B)$. It is also worth making the following remark at this stage:
\begin{remark} \label{rem:non-convex}
    Since $\cU$ is convex, then the set $\cX(K)$ is convex for each $K\in \cK$. However, $\cZ$ may not be convex in general since it is obtained through an uncountable union of convex sets.  
\end{remark}

Finally, we define regret as the difference in cumulative cost between the realized states and the best point in $\cZ$,
\begin{equation}
    \cR_{\cZ}(T) \triangleq \sum_{t=1}^T f_t(x_t) - \min_{x \in \cZ} \sum_{t=1}^T f_t(x). \label{eq:regret_ss_CL}
\end{equation}

To re-iterate our goal in this paper, we wish to design an online control algorithm whose regret with respect to this benchmark grows sub-linearly with the horizon $T$. Before presenting such an algorithm, we introduce a few additional assumptions that are standard in the online control literature~\cite{hazan_introduction_2025}. 
\subsection{Assumptions} \label{sec:assump}
We first assume boundedness of the convex input set $\cU$. 

\begin{assum}
\label{ass:set_bound}
    \[\|v\|\leq U \;\;\text{ for all } \;\; v\in\cU.\] 
\end{assum}

Next, we assume that the gradients of the cost functions grow sub-linearly away from the origin.
Notably, this includes, but is not limited to, quadratic costs.

\begin{assum}[Smoothness of Costs]
\label{ass:grad_bound}
    It holds that $\| \nabla f_t(x) \| \leq L D$ for all $\| x \| \leq D$.
\end{assum}

Assumptions \ref{ass:set_bound} and \ref{ass:grad_bound} are applicable in all the results presented in the remainder of this paper. 

\section{Online Control Algorithm} \label{sec:batching_algo}
To address the richer benchmark introduced in \eqref{eq:regret_ss_CL}, we now need an online algorithm that can compete with stabilizing at the best point in $\cZ$ in hindsight. 
When competing against a benchmark consisting of steady-states achievable under constant inputs, our earlier work \cite{hebbar_revisiting_2025} employed an online projected gradient descent method to generate a sequence of target steady-states for the system to track. 
However, that approach does not carry over directly in the present setting. The key reason is that enlarging the benchmark from steady-states achievable by constant inputs to those achievable under affine strongly-stabilizing controllers introduces two new difficulties that were absent before. 

First, as highlighted in Remark \ref{rem:non-convex}, the set $\cZ$ is non-convex. This rules out a projection-based method which requires convexity of the decision set in order to be well-defined. In our earlier paper, the relevant steady-state set enjoyed the necessary convexity properties, but that is no longer the case once the feedback gain $K$ is allowed to vary. Second, each point $z$ in $\cZ$ is associated with a particular affine controller, and so to track these changing target steady-state the learner must also change the underlying feedback gain over time. As a result, the system no longer evolves under a fixed closed-loop dynamics, but under a \emph{switched} family of stable dynamics. Since arbitrarily switching among stable controllers does not preserve stability, one must be more careful in how such switching occurs.  

\begin{algorithm}[h]  
\caption{Batching approach for OLC}
\label{alg:batching_based}
\begin{algorithmic}[1]
    \REQUIRE Parameters $H,\eta,x_1$ 
    \STATE Sample $\sigma_i \stackrel{i.i.d}{\sim} \exp{(\eta)}$, Set $n=1$, Set $f_0(z) = \langle \sigma, z \rangle$ \label{line:init}
    \WHILE{ $n\leq \lceil\nicefrac{T}{H}\rceil$ }
        \STATE Pick $z^n = \bO_{\epsilon} \left(\sum_{\tau=0}^{(n-1)H} f_\tau(z), \cZ \right)$  \label{line:update_z} 
        \STATE Pick $K^n,v^n$ such that $z^n = (I-A+BK^n)^{-1} B v^n$. \label{lin:update_Ku} 
        \FOR{$t = (n-1)H+1, \dots, n H$}
            \STATE Pick inputs $u_{t} = -K^n x_{t} + v^n$. \label{lin:input-generation}
            \STATE Update state $x_{t+1} = A x_t + B u_t$, incur cost $f_t(x_t)$
            \IF{t=T}
               \STATE Terminate
            \ENDIF
        \ENDFOR
        \STATE n = n+1
    \ENDWHILE
\end{algorithmic}

\end{algorithm}

To address these two difficulties, we build on the Follow-the-Perturbed-Leader (FTPL) framework for online \emph{non-convex} optimization \cite{suggala2020online}. 
However, unlike OGD, FTPL does not provide a deterministic or high-probability guarantee on the magnitude of change in iterates. Rather it only bounds this magnitude in expectation, and therefore does not immediately ensure that the policy changes slow enough to maintain stability.
To address this, we introduce a batching scheme where the policy is only updated with FTPL every $H$ time steps. 
In order to effectively apply FTPL in this batching scheme, we treat the sum of the cost functions in each batch as a single cost function that is fed in to the FTPL algorithm.

Our proposed algorithm is stated precisely in Algorithm~\ref{alg:batching_based}. 
The algorithm divides the learning horizon into batches of length $H$, which are indexed by $n \in \{1,..., \lceil T/H \rceil \}$.
At the start of each batch, the learner selects a target steady-state $z^n \in \mathcal Z$ with an approximate FTPL update (Line \ref{line:update_z}), where $\bO_\epsilon$ refers to an $\epsilon$-approximate minimization oracle as specified in Definition~\ref{def:approx_oracle}.

\begin{defn}[Approximate Minimization Oracle]
\label{def:approx_oracle}
    Given a function $f$ and set $\cX$, the oracle output $x^* = \bO_\epsilon\big(f,\cX\big)$ satisfies $f(x^*) \leq \min_{x\in \cX} f(x) + \epsilon$.
\end{defn}

The reason we allow for an approximate minimization oracle, is because the feasible set $\cZ$ is non-convex and therefore it might be difficult to solve a minimization to optimality.
Note that the FTPL update in line \ref{line:update_z} also includes the random regularization function $f_0 (z) = \langle \sigma, z \rangle$ where $\sigma$ is an exponential random variable with parameter $\eta$, as defined in line \ref{line:init}.

Once $z^n$ is selected, the learner chooses a feedback gain $K^n \in \mathcal K$ and an offset $v^n \in \mathcal U$ such that $z^n$ is a steady-state of the closed-loop system under the affine controller $u = -K^n x + v^n$ (Line \ref{lin:update_Ku}). Such a pair exists by the definition of $\mathcal Z$. The controller $(K^n,v^n)$ is then held fixed for the duration of the batch.

At a high level, the batching parameter $H$ trades off two competing effects. Larger batches allow the closed-loop dynamics under a fixed controller to settle closer to their associated steady-state, reducing tracking error. Smaller batches, on the other hand, allow the learner to adapt more quickly to the observed sequence of costs. 

We choose $H$ to balance these effects. While we formalize this intuition by providing formal bounds on the regret incurred by Algorithm \ref{alg:batching_based} in the following section, for now, we state the following important consequence of our batching scheme:
\begin{lem}[Bounded State] \label{lem:bounded_x}
    When inputs are generated according to Algorithm \ref{alg:batching_based} with
    the batch size $H = \lceil\frac{\ln{(2\kappa)}}{-\ln{1-\gamma}}\rceil$, the state of the system evolving under \eqref{eq:dyn} in the absence of disturbances remains bounded. In particular, there exists a constant $D_x<\infty$ such that    \[\|x_t\|\leq D_x \quad \forall t.\]
\end{lem}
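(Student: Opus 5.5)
Within each batch the controller is fixed, so the plan is a batch-to-batch contraction. In batch $n$ the closed-loop matrix is $M_n \triangleq A - BK^n$, with fixed point $z^n = (I-M_n)^{-1}Bv^n$. Because $z^n = M_n z^n + Bv^n$, the error $e_t \triangleq x_t - z^n$ obeys $e_{t+1} = M_n e_t$ for all $t$ in that batch. Let $t_n = (n-1)H+1$ be the first step of batch $n$. Then $x_{t_n+k} - z^n = M_n^k (x_{t_n} - z^n)$ for $0\le k \le H$; the case $k=H$ gives the state entering batch $n+1$.

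\textbf{Step 1 (strong stability).} Since $K^n\in\cK$, Definition~\ref{defn:strong_stab_K} gives $\|M_n^k\| = \|H^{-1}J^kH\| \le \kappa(1-\gamma)^k$. The choice $H = \lceil \ln(2\kappa)/(-\ln(1-\gamma))\rceil$ ensures $(1-\gamma)^H \le 1/(2\kappa)$, hence $\|M_n^H\|\le 1/2$. Within a batch we also have the crude bound $\|M_n^k\|\le\kappa$. If $\gamma=1$, then $J=0$ and every power beyond the first vanishes, so this case is trivially fine.

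\textbf{Step 2 (bounded targets).} This is the step I expect to be the main obstacle. I need a uniform bound $\|z\|\le D_z$ for every $z\in\cZ$. From $(I-M)z = Bv$ we get $z = \sum_{k\ge0} M^k B v$, which is a Neumann series that converges because $\|M^k\|\le\kappa(1-\gamma)^k$. Hence
\[
\|z\| \le \sum_{k\ge 0}\kappa(1-\gamma)^k\|B\|U = \frac{\kappa\|B\|U}{\gamma} \triangleq D_z,
\]
using Assumption~\ref{ass:set_bound}. The delicate point is that $\cK$ contains arbitrarily many gains, including large $K$, so a naive bound on $\|(I-A+BK)^{-1}\|$ through $\|K\|$ would fail. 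The strong-stability constants are what make the bound uniform over $K$.

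\textbf{Step 3 (recursion).} Define $a_n \triangleq \|x_{t_n}\|$. Using Steps 1 and 2,
\[
a_{n+1} = \|z^n + M_n^H(x_{t_n}-z^n)\| \le D_z + \tfrac12 (a_n + D_z) = \tfrac12 a_n + \tfrac32 D_z.
\]
By induction, $a_n \le \max\{\|x_1\|, 3D_z\}$ for all $n$. Within a batch,
\[
\|x_{t_n+k}\| \le D_z + \kappa(a_n + D_z).
\]
It follows that $D_x \triangleq D_z + \kappa(\max\{\|x_1\|,3D_z\}+D_z)$ bounds $\|x_t\|$ for all $t$. The final partial batch is covered by the same within-batch bound.
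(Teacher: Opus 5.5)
Your proof is correct. The paper omits its proof of this lemma, but your argument rests on the same two facts the paper uses elsewhere: the per-batch contraction $\|(A-BK^n)^H\|\le\kappa(1-\gamma)^H\le\tfrac12$ guaranteed by the choice of $H$, and the uniform bound $\|z\|\le\frac{\kappa}{\gamma}\|B\|U$ on $\cZ$ (Lemma~\ref{lem:Z-bounded}, which you establish in the 2-norm). Your batch-to-batch recursion $a_{n+1}\le\tfrac12 a_n+\tfrac32 D_z$ is a compact form of the unrolled geometric-series argument that the paper carries out with Lemma~\ref{lem:state-evolution} in the proof of Theorem~\ref{thm:regret-bd-batch-algo}.
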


The proof of this result is omitted in the interest of space.
Note that $D$ may depend on the norm of the initial state $x_1$ and the bound $U$ on the input set. 

\begin{remark}[Extension to non-convex stage costs] \label{sec:apply-non-conx_f}
    It is worth noting that, since Algorithm \ref{alg:batching_based} employs a non-convex optimization oracle, the same approach is also applicable to ONC problems with non-convex stage costs. In particular, both Algorithm~\ref{alg:batching_based} and the regret result stated in the following section can be extended unchanged to non-convex costs $\{f_t\}$ by replacing Assumption \ref{ass:grad_bound} with a equivalent Lipschitzness criterion. We nevertheless present the method in the convex-cost setting, both because this is the standard setting in ONC and because convexity affords us access to simple and relatively efficient approximate oracles $\bO_\epsilon$. We employ one such oracle in the simulations in  Section~\ref{sec:simulations}.
\end{remark}

\section{Regret Result} \label{sec:regret-result}

In this section, we give the regret guarantees of our proposed algorithm.
In particular, Theorem \ref{thm:regret-bd-batch-algo} shows that the algorithm enjoys $\cO(\sqrt{T})$ expected regret with an appropriate choice of algorithm parameters $\epsilon, \eta, H$.

\begin{thm} \label{thm:regret-bd-batch-algo}
    Picking inputs according to Algorithm \ref{alg:batching_based} with the batch size $H = \lceil\frac{\ln{(2\kappa)}}{-\ln{1-\gamma}}\rceil$, an optimization oracle $\bO_\epsilon$ with accuracy parameter $\epsilon\in \cO(\nicefrac{1}{T})$ and with learning parameter \(
        \eta = {\left(5L\sqrt{2NT\left(1-\frac{\ln(2\kappa)}{\ln(1-\gamma)} + \frac{8\kappa^3}{\gamma^2}\right)}\right)^{-1}} 
    \)  
    guarantees the following regret bound:
    \begin{flalign*}
        &\bE\left[\cR_{\cZ}(T)\right] \\
        & \quad \leq  10 N L \bar D \sqrt{2 N T  {\left(1-\frac{\ln(2\kappa)}{\ln(1-\gamma)} + \frac{8\kappa^3}{\gamma^2}\right)}}  +  \cO(1) 
    \end{flalign*}
        
    where $\bar D = \max\{D_x,\nicefrac{\kappa}{\gamma} \|B\| U\}$. The expectation in the regret bound is taken over the randomization $\sigma$ employed in the Algorithm \ref{alg:batching_based}.  
\end{thm}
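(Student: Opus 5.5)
We split the regret into a \emph{tracking} term and an \emph{online-learning} term, writing
\[
\cR_{\cZ}(T) = \sum_{n}\sum_{t\in \text{batch } n}\bigl(f_t(x_t)-f_t(z^n)\bigr) + \Bigl(\sum_{n} F_n(z^n) - \min_{z\in\cZ}\sum_n F_n(z)\Bigr),
\]
where $F_n(z)=\sum_{t\in\text{batch }n} f_t(z)$ is the aggregated batch cost fed to FTPL. The key is to handle these two terms separately: a stability argument for the first and the FTPL analysis of \cite{suggala2020online} for the second.

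\textbf{Tracking term.} By convexity and Assumption~\ref{ass:grad_bound}, we have $f_t(x_t)-f_t(z^n)\le L\bar D\|x_t-z^n\|$. Every point of $\cZ$ satisfies $\|z\|\le \|(I-A+BK)^{-1}\|\|B\|U\le \frac{\kappa}{\gamma}\|B\|U$, and $\|x_t\|\le D_x$ by Lemma~\ref{lem:bounded_x}, so both arguments lie in a ball of radius $\bar D$. Within batch $n$, the error $e_t=x_t-z^n$ evolves as $e_{t+1}=(A-BK^n)e_t$, hence $\|e_t\|\le \kappa(1-\gamma)^{t-t_n}\|x_{t_n}-z^{n}\|$. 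Summing gives a per-batch tracking cost of at most $L\bar D\frac{\kappa}{\gamma}\|x_{t_n}-z^n\|$. We then bound $\|x_{t_n}-z^n\|\le \|x_{t_n}-z^{n-1}\|+\|z^{n-1}-z^n\|$. The choice of $H$ makes $\kappa(1-\gamma)^H\le 1/2$, so the first piece is at most half the previous batch's initial error. Unrolling this contraction bounds the total tracking cost by $\cO(1)+ \frac{2\kappa^2}{\gamma}L\bar D\sum_n\|z^n-z^{n-1}\|$, where constants like $\kappa^3/\gamma^2$ will appear after careful bookkeeping.

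\textbf{Learning term.} Here we invoke the FTPL guarantee for $\epsilon$-approximate oracles with exponential perturbations \cite{suggala2020online}. This gives both a regret bound of the form $\cO(\eta^{-1} N\bar D)$ (the perturbation term is at most $\bE\|\sigma\|_\infty\cdot 2\bar D$), plus a term of order $\eta N^2\bar D\sum_n \mathrm{Lip}(F_n)^2$, plus $\epsilon T$. It also gives a stability bound $\bE\|z^n-z^{n+1}\|_1\lesssim \eta N \bar D\,\mathrm{Lip}(F_n)$. Since $\mathrm{Lip}(F_n)\le HL\bar D$ and there are $T/H$ batches, the FTPL regret scales like $\eta^{-1}N\bar D + \eta N^2 L^2\bar D^3 H T$. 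Because the tracking term is linear in $\sum\|z^n-z^{n-1}\|$, its expectation is controlled by the same stability bound. This contributes a term of the order $\eta N L^2 \bar D^2(\kappa^3/\gamma^2)T$, which explains the combination $1+H+8\kappa^3/\gamma^2$ appearing inside the square root.

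Finally, we add the pieces and optimize $\eta$, which yields the stated choice. Taking $\epsilon=\cO(1/T)$ makes $\epsilon T=\cO(1)$. The main obstacle is the FTPL stability step. The aggregated losses have Lipschitz constants scaling with $H$ and $\bar D$, and the domain $\cZ$ is non-convex (though bounded), so the cited stability lemma must be applied carefully, using only boundedness of $\cZ$ and the exponential perturbation, with $\ell_1$ versus $\ell_2$ norms and factors of $N$ tracked. Matching the exact constants (the factor $5$, and $10N$) is routine once this lemma is in place.
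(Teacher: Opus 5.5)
Your decomposition and your treatment of the learning term match the paper's, which likewise imports the Suggala--Netrapalli FTPL analysis for term (A). Your handling of the tracking term (B), however, is a genuinely different and more elementary route.

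The paper unrolls the whole switched trajectory via Lemma~\ref{lem:state-evolution}. It bounds the cross-batch transition products by powers of $\nu=\kappa(1-\gamma)^H=\tfrac12$, together with $\|I-\underline A_m\|\le 2\kappa$. It then needs the multi-step stability bound of Lemma~\ref{lem:generalized-stability-bd} for $\bE\|z^m-z^n\|_1$ with arbitrary $m<n$, summed through $\sum_k k\nu^k\le \nu/(1-\nu)^2$. This is exactly where $8\kappa^3/\gamma^2=2\kappa^3/(\gamma^2(1-\nu)^2)$ comes from.

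Your contraction on batch-start errors, $a_{n+1}\le \tfrac12 a_n+\|z^n-z^{n+1}\|$, with per-batch tracking cost at most $\tfrac{\kappa}{\gamma}a_n$ times the Lipschitz constant, does several things at once:
\begin{itemize}
\item it uses only the consecutive-iterate FTPL stability that term (A) already requires;
\item it bypasses Lemma~\ref{lem:state-evolution} and the multi-step lemma altogether;
\item it gives Lemma~\ref{lem:bounded_x} as a by-product.
\end{itemize}
Summing the recursion gives $\sum_n a_n\le 2a_1+2\sum_n\|z^n-z^{n+1}\|$. So the tracking contribution to the $\eta$-term carries a factor $2\kappa/\gamma$ (not $2\kappa^2/\gamma$), where the paper's carries $8\kappa^3/\gamma^2$. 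Since $\kappa\ge 1\ge\gamma$, the stated bound with the stated $\eta$ follows a fortiori. You therefore do not need to ``recover'' $\kappa^3/\gamma^2$ by bookkeeping. The paper's unrolled formula is more explicit about the trajectory, and its $h$-step lemma has an $\epsilon$-term decaying in $h$. Here that buys nothing, since every $\epsilon$-contribution is $\cO(\epsilon N_H)=\cO(1)$ anyway.

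There are two bookkeeping caveats. First, with an $\epsilon$-approximate oracle the one-step stability bound carries an additive term of order $\epsilon/\mathrm{Lip}(F_n)$ (the paper's $6\epsilon/(hLH)$). You omitted it, but it only adds $\cO(\epsilon N_H)$ to the tracking term.

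Second, and more important for the constants: your Lipschitz constant $L\bar D$ is the literal consequence of Assumption~\ref{ass:grad_bound}. The paper's proof instead uses $L$ itself as the Lipschitz constant (see \eqref{eq:termB-expansion} and its bound on (A)). That is how it gets $\bar D$ to the first power and a $\bar D$-free $\eta$. Carried through consistently under your convention (and with $N^2$, not $N$, in the Suggala--Netrapalli stability bound), you get an $\eta$-term of order $\eta N^2L^2\bar D^3T(H+2\kappa/\gamma)$. Optimizing with $\eta=(5L\bar D\sqrt{2NT(\cdots)})^{-1}$ then gives a leading term $10NL\bar D^2\sqrt{2NT(\cdots)}$. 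This is still $\cO(\sqrt T)$, but the exact constants of the statement are reproduced only under the paper's convention, not ``routinely'' under yours.
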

\begin{proof}(Sketch)
    We defer the detailed proof to Appendix~\ref{sec:proof}, and give only a sketch here. To this end, we begin by defining the batch cumulative cost $F_n$ of the $n$th batch:
    \begin{equation}
        F_n(z)
        \triangleq
        \sum_{h=1}^{H} f_{(n-1)H+h}(z).
    \label{eq:batched-costs}
    \end{equation}
    The batch cost $F_n$ is central to our analysis approach because our algorithm can be viewed as applying FTPL on the sequence of batch costs. Indeed, with $N_H \triangleq \frac{T}{H}$ as the number of batches (which we take as integral for simplicity of exposition\footnote{The remaining terms if any only contribute a $\cO(H)$ factor, which can be absorbed into the $\cO(1)$ term because our choice of $H$ does not scale with $T$.}), the regret can be re-written as,
    \begin{flalign}
        & \mathcal R_{\mathcal Z}(T) = \underbrace{
        \sum_{n=1}^{N_H}
        \bigl(F_n(z^n)-F_n(z^\star)\bigr)}_{(A)} \label{eq:batched-regret-decomp} \\[-8pt] 
        & +\underbrace{
        \sum_{n=1}^{N_H}\sum_{h=1}^{H}
        \bigl(
            f_{(n-1)H+h}(x_{(n-1)H+h})-f_{(n-1)H+h}(z^n)
        \bigr)}_{(B)}, \notag
    \end{flalign}
    where $z^\star \in \argmin_{z \in \mathcal Z} \sum_{t=1}^{T} f_t(z)$.
    Intuitively, term (A) is the regret of FTPL in terms of the batch costs, and term (B) captures the difference in cost between the realized states $x_t$ and the steady-states $z^n$ chosen in each batch. Term (A) can be viewed as the regret of an FTPL algorithm operating on the sequence of batched costs $F_n$ and its analysis closely follows the approach taken by Suggala and Netrapalli \cite{suggala2020online}. Bounding term (B), however, is a bit more involved. 

    The analysis of term (B) relies on bounding the distance between the chosen steady state point $z^n$ and the realized state $x_t$.
    Indeed, if we can show that distance between $z^n$ and $x_t$ is $\cO(1/\sqrt{T})$, and that all $f_t$ are Lipschitz, then it holds that term (B)  is $\cO(\sqrt{T})$. In order to get there, we first express the tracking error in terms of the switched closed-loop dynamics and the variation of the target sequence ${z_t}$ (Lemma \ref{lem:state-evolution}) as follows 
    \begin{equation*} 
    \begin{split}
        & x_{t+1}-z_{t+1}  = \bar A_t \bar A_{t-1} \dots \bar A_1 (x_1-z_{t+1})\\
        & \quad \;+\; \sum_{\tau=1}^{t} \bar A_t \bar A_{t-1}\cdots \bar A_{\tau+1}\,\bigl(I-\bar A_\tau\bigr)\,(z_\tau-z_{t+1})
    \end{split}
    \end{equation*}
    where $\bar{A}_t \triangleq A - B K^{n_t}$ and $n_t$  denotes the batch containing time $t$. Thus, the tracking error is driven by two quantities: products of the closed-loop matrices \(\bar A_t\), and the variation of the FTPL iterates \(\{z^n\}\). The latter can be bounded using a generalized FTPL stability argument, which shows that the distance between targets selected in different batches scales as \(\mathcal{O}(T^{-1/2})\) in expectation (Lemma~\ref{lem:generalized-stability-bd}).

    The remaining step is to show that the matrix products above decay geometrically across batches. This is precisely where the batching scheme and the choice of \(H\) are used. Since the controller is held fixed within each batch and each \(K^n\) is \((\gamma,\kappa)\)-strongly stabilizing, we have
\begin{flalign*}
\|\bar A_t \bar A_{t-1}\cdots \bar A_s\|
&\leq \prod_{n=n_s+1}^{n_t}\|(A-BK^n)^H\| \\
&\leq \bigl(\kappa(1-\gamma)^H\bigr)^{\,n_t-n_s}
\leq 2^{-(n_t-n_s)},
\end{flalign*}
where the final step follows from our choice of $H$. 
In other words, the influence of targets selected in earlier batches on the current tracking error decays exponentially fast. Combining this geometric decay with the \(\mathcal{O}(T^{-1/2})\) bound on the variation of the FTPL iterates yields
\(
\mathbb{E}\!\left[\|x_t-z_t\|\right]=\mathcal{O}(T^{-1/2}).
\)
This gives \(\mathbb{E}[(B)] = \mathcal{O}(\sqrt{T})\), and hence completes the proof sketch.
\end{proof}
  
Theorem~\ref{thm:regret-bd-batch-algo} shows that the steady-state viewpoint developed for online non-stochastic control in \cite{hebbar_revisiting_2025} can be extended beyond the benchmark based on constant inputs to accommodate affine controller based benchmarks. Moreover, even though the new benchmark is strictly stronger, the qualitative regret bound still enjoys the same rate of $\sqrt{T}$. However, it should be noted that the cost of the extension appears in the constants, which now depend more heavily on the stability parameters and the accuracy of the approximate optimization oracle.

This stronger benchmark, however, comes at a significant computational price. Unlike the simpler gradient-based procedure presented for the constant-input comparison class in our previous work \cite{hebbar_revisiting_2025}, the present approach relies on an oracle-based approach developed online non-convex optimization \cite{suggala2020online}. In particular, each batch update requires solving a non-convex optimization problem to recover an affine controller. At the same time, the ability to work with an approximate oracle is an important advantage of the approach, since the regret guarantee does not require these subproblems to be solved exactly. In the next section, we describe and implement one such approximate oracle in our simulations, and show that the resulting procedure remains practically viable despite the added complexity introduced by the richer benchmark.

\section{Simulations} \label{sec:simulations}
All our simulations are run with the following system matrices \vspace{-0.2cm}
\[A = \frac{1}{3.6}\begin{bmatrix}
    1 & 0.2 & 0 \\
    0 &  1 &  0.2 \\
    0.2 & 0 & 1 \\
\end{bmatrix} \text{ and } B = \begin{bmatrix}
    0 & 1 \\
    0 & 0 \\
    1 & 0
\end{bmatrix}.\]

\[f_t(x) = (x-c_t)^\intercal Q_t (x - c_t) \quad \forall \; 1\leq t \leq T\]
where the sequence of positive definite matrices $\{Q_t\}$ and `target' vectors $\{c_t\}$ are generated i.i.d from their respective distribution\footnote{{For code and more details - github.com/vijeth27/OnlineLinearControl-AffineBenchmark.git}}. 
Importantly, we re-introduce the disturbances at this stage and assume that the system evolves according to \eqref{eq:dyn} under non-zero disturbances. We do so because a direct comparison with the method of Agarwal et al. \cite{agarwal_online_2019} require working in the same disturbance-driven setting. We make precise the minor modification required to our algorithm to handle disturbances in Appendix \ref{app:ext-non-zero-costs}. In all our simulation, the disturbances $\{w_t\}$ are sampled i.i.d from a uniform distribution. 
We run both the proposed method (Algorithm \ref{alg:batching_based}), referred to as \textbf{BatchFTPL}, and the online control method of Agarwal et al. \cite{agarwal_online_2019}, referred to as \textbf{DAC} for $T=500$ time steps on the problem outlined above. We run $N=50$ iterations each with a newly sampled sequence of $\{Q_t\},\{c_t\}$ and $\{w_t\}$. 

\begin{figure}[ht]
     \centering
     \begin{subfigure}[b]{0.35\textwidth}
         \centering
         \includegraphics[width=\textwidth]{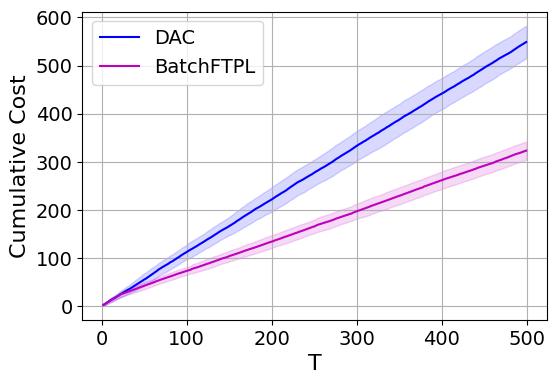}
         \caption{Cumulative cost incurred by both approaches}
         \label{fig:cumu-cost}
     \end{subfigure}
     \hfill
     \begin{subfigure}[b]{0.23\textwidth}
         \centering
         \includegraphics[width=\textwidth]{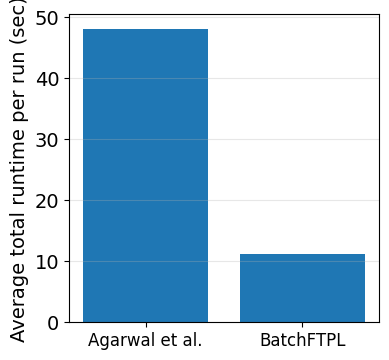}
         \caption{Total runtime averaged across $N$ iterations.}
         \label{fig:total-cost}
     \end{subfigure}
          \begin{subfigure}[b]{0.23\textwidth}
         \centering
         \includegraphics[width=\textwidth]{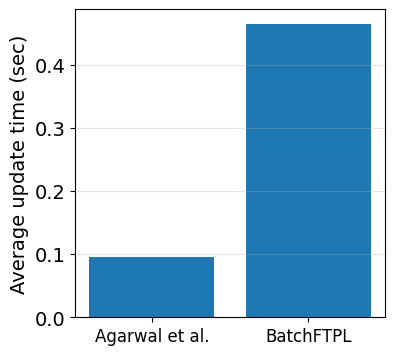}
         \caption{Average time for controller update}
         \label{fig:per-call-cost}
     \end{subfigure}
        \caption{\textbf{(Simulation Results)} Plots comparing both the performance and the computational cost of running the two approaches: \textbf{BatchFTPL} and \textbf{DAC}.}
        \label{fig:simu-results}
\end{figure}
A key ingredient in these simulations is the approximate oracle used in Line~\ref{line:update_z} of Algorithm~\ref{alg:batching_based}. Since $\cZ$ is generally non-convex, 
solving the corresponding optimization exactly is difficult. Instead, we approximate the set $\cK$ of stabilizing controllers used in defining the set $\cZ$ by a \emph{finite} bank of $N_{\cK}=100$ $(\gamma,\kappa)$-stabilizing controllers. For each controller $K$ in this bank, the oracle sub-problem reduces to a convex program over the corresponding steady-state set $\cX(K)$, and the overall oracle output is obtained by selecting the best candidate across the bank. While this discretized procedure is only approximate, it provides a practical implementation of the oracle required by the algorithm. 

Figure \ref{fig:cumu-cost} shows the mean and standard deviation of the cumulative cost incurred by the two approaches \textbf{BatchFTPL} and \textbf{DAC} across iterations. We observe that our proposed algorithm consistently outperforms \textbf{DAC} approach, indicating that the affine benchmark considered by \textbf{BatchFTPL} translates into a tangible performance improvement in the simulated environment. Figures~\ref{fig:total-cost} and \ref{fig:per-call-cost} compare the computational cost of the two implementations. As Figure~\ref{fig:per-call-cost} shows, a single oracle call in Line~\ref{line:update_z} is more expensive than one update of the DAC policy in the \textbf{DAC} approach, since even after discretizing $\cK$, each oracle evaluation scans over $N_{\cK}=100$ stabilizing gains and solves a convex subproblem for each. However, the oracle is invoked only once every $H$ steps. In our experiments, this lower update frequency more than offsets the larger per-call cost, yielding a lower total runtime than \textbf{DAC}, as shown in Figure~\ref{fig:total-cost}.

We emphasize that these experiments do not imply that the proposed approach is always computationally cheaper than a DAC based approach \cite{agarwal_online_2019}. The computational advantage observed here is implementation- and parameter-dependent. The simulations do show, however, that Algorithm~\ref{alg:batching_based}, even with a simple approximate oracle, can be competitive in both performance and runtime. In that sense, these results suggest that batching-based methods of this kind are a useful addition to the algorithmic toolbox for online non-stochastic control. 

\section{Conclusion} \label{sec:conclusion}

In this work, we extended the steady-state approach of online control introduced in our previous work \cite{hebbar_revisiting_2025} by enlarging the benchmark controller class from constant inputs to constant affine controllers. We showed that, despite the resulting non-convexity and the need to handle time-varying closed-loop dynamics, one can still obtain $\cO(\sqrt{T})$ regret. We note however, that the price of this stronger benchmark is computational. Our algorithm relies on an approximate non-convex optimization oracle at each controller update. Even so, our simulations showed that a simple and computationally inexpensive approximate oracle can already provide strong empirical performance, suggesting the practicality of our approach.

An important direction of future work involves exploring if this steady-state approach can be extended further. In particular, we considered static linear feedback in defining our benchmark set here, and a natural next step is to consider steady-states achievable under dynamic linear feedback controllers. 
More broadly, if this steady-state approach is to apply to more realistic systems, it will also be important to extend it to settings with safety constraints on states and inputs, as well as to problems with partial observability.

\appendix
\subsection{Extension to non-zero disturbances} \label{app:ext-non-zero-costs}
For clarity, the main development of the paper focuses on the disturbance-free setting. In the simulations, however, we re-introduce disturbances so as to allow a direct comparison with the disturbance-action based method of Agarwal et al. To do so, we use a state decomposition similar in spirit to the one employed in our earlier work \cite{hebbar_revisiting_2025}, but adapted to the present affine-controller setting.

Consider the system evolving according to \eqref{eq:dyn}. By the superposition principle, its state can be decomposed as \(x_t = \bar x_t + x_t^d\) 
where $x^d_t$ denotes the disturbance-driven part of the state, and $\bar x_t$ is the \emph{nominal} state evolution in the absence of disturbance.  Since the controller learned by Algorithm~\ref{alg:batching_based} changes from batch to batch, implementing it with feedback on the full state $x_t$ would lead to the disturbance-driven part also depending on the learned gains. To avoid this, we introduce a fixed strongly stabilizing gain $K_0 \in \cK$ chosen once in advance, and implement the control input in the form 
\begin{equation}
    u_t = -K^n \bar x_t - K_0 x^d_t +v^n \label{eq:decoupled-inputs}
\end{equation}
in the $n^{th}$ batch of Algorithm~\ref{alg:batching_based}. This form of input decouples the dynamics of the components as 
\[\bar x_t = (A-BK^n) \bar x_t + Bv^n,\;\; x^d_t = (A-BK_0) x^d_t + w_t,\]
and the disturbance-driven components evolves under fixed closed-loop dynamics. Moreover, this component remains bounded under bounded disturbances due to the strong stabilizing property of $K_0$. We may then define the translated cost functions 
    $g_t(\bar x) \triangleq f_t (\bar x + x^d_t).$
Since, $x^d_t$ is unaffected by the gains learned by Algorithm \ref{alg:batching_based}, the sequence $\{g_t\}$ thus defined is also independent of these choices. Hence, our simulations can be viewed as running Algorithm~\ref{alg:batching_based} on the nominal system under the translated costs $\{g_t\}$. 
For the regret plots reported in the simulations, both the total cost and the benchmark are evaluated in these nominal coordinates. 
However, because $f_t(x_t) = g_t(\bar x_t)$, these are identical to their values as incurred by the learner. 

\subsection{Some Helpful Lemmas}
We omit all proofs in the interest of space.
The following result is refinement of the well known bound \cite{cesa-bianchi_prediction_2006} on the regret of FTRL algorithms. 
\begin{thm} \label{thm:FTPL-general-bound}
    When $\{z^n\}_{n=1}^{N_H}$ is generated according to Line \ref{line:update_z},
    we can bound the expected regret of term (A) as
    \begin{flalign*}
        \bE[(A)] \leq & \sum_{n=1}^{N_H} \bE[F_n(z^n)-F_n(z^{n+1})] \\
        & + \bE[\langle \sigma,z^* - z^1\rangle] + \epsilon (N_H+1)
    \end{flalign*}
\end{thm}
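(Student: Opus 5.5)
The approach is the standard ``be-the-leader'' (BTL) argument for follow-the-leader methods, adapted to an $\epsilon$-approximate oracle with the linear perturbation $f_0(z)=\langle\sigma,z\rangle$ treated as the zeroth loss. Define the extended sequence $F_0 \triangleq f_0$ and $F_n$ as in \eqref{eq:batched-costs}. Line \ref{line:update_z} then reads $z^n = \bO_\epsilon\big(\sum_{m=0}^{n-1}F_m,\cZ\big)$, since $\sum_{\tau=0}^{(n-1)H} f_\tau = f_0 + \sum_{m=1}^{n-1}F_m$. Define $z^{N_H+1}$ the same way, namely as the oracle output on $\sum_{m=0}^{N_H}F_m$. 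This point is never played; it is used only in the analysis, and it is the reason the sum in the statement runs up to $F_{N_H}(z^{N_H+1})$. The claim then follows from
\[
\sum_{n=1}^{N_H}F_n(z^{n+1}) \le \sum_{n=1}^{N_H}F_n(z^\star) + \langle\sigma,z^\star-z^1\rangle + \epsilon(N_H+1),
\]
after adding $\sum_n (F_n(z^n)-F_n(z^{n+1}))$ to both sides and taking expectations over $\sigma$.

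I would prove the more general claim, by induction on $k=0,1,\dots,N_H$, that for every $z\in\cZ$,
\[
\sum_{m=0}^{k}F_m(z^{m+1}) \le \sum_{m=0}^{k}F_m(z) + \epsilon(k+1).
\]
The base case $k=0$ is exactly the oracle guarantee of Definition \ref{def:approx_oracle} for $z^1=\bO_\epsilon(F_0,\cZ)$. For the inductive step, apply the hypothesis at $k-1$ with the particular choice $z=z^{k+1}$ and then add $F_k(z^{k+1})$ to both sides. This gives $\sum_{m=0}^{k}F_m(z^{m+1}) \le \sum_{m=0}^{k}F_m(z^{k+1}) + \epsilon k$. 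The oracle guarantee for $z^{k+1}$ on the cumulative function $\sum_{m=0}^kF_m$ bounds the right-hand side by $\sum_{m=0}^kF_m(z)+\epsilon(k+1)$ for any $z\in\cZ$.

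Nowhere does this use convexity of $\cZ$ or of $F_m$, only the oracle property. This is why the argument is valid on the non-convex set $\cZ$.

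Taking $k=N_H$ and $z=z^\star$ in the claim and moving the $m=0$ terms to the right yields
\[
\sum_{n=1}^{N_H}F_n(z^{n+1}) \le \sum_{n=1}^{N_H}F_n(z^\star) + \langle\sigma,z^\star\rangle - \langle\sigma,z^1\rangle + \epsilon(N_H+1),
\]
which is exactly the inequality needed above.

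The main obstacle is bookkeeping rather than mathematics. First, the index conventions must match Line \ref{line:update_z}: the perturbation has to be folded in as $F_0$, and the fictitious iterate $z^{N_H+1}$ must be introduced. Second, the expectations must be well defined. This needs $\bE\|z^n\|<\infty$, which I would take from $z^n\in\cZ$ together with boundedness of the relevant targets (via $\bar D$). The approach also assumes that $z^\star$ is attained; if it is not, one can run the same argument with a near-minimizer and pass to the limit.
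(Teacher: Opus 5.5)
Your proof is correct and takes the route the paper indicates. The paper omits the proof, calling the result a refinement of the standard follow-the-leader/be-the-leader bound of Cesa-Bianchi and Lugosi, adapted to approximate oracles as in Suggala and Netrapalli. Like that argument, you fold the perturbation in as $F_0$, introduce the unplayed iterate $z^{N_H+1}$, and run the be-the-leader induction while charging one $\epsilon$ per oracle call, which gives exactly the $\epsilon(N_H+1)$ term.
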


\begin{lem}[Bounded $\cZ$]\label{lem:Z-bounded}
    \[
        \|z\|_\infty \le \frac{\kappa}{\gamma}\,\|B\|\,U \quad \forall \; z\in \cZ. 
    \]
\end{lem}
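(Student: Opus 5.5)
The plan is to solve the steady-state equation for $z$ and bound the resulting matrix inverse through a Neumann series, using the strong-stability parameters of $K$. The $\infty$-norm bound will then follow from a $2$-norm bound.

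Fix $z\in\cZ$. By Definition~\ref{def:ss-set-cl} there exist $K\in\cK$ and $v\in\cU$ with $z=(A-BK)z+Bv$, that is, $(I-\bar A)z=Bv$ where $\bar A\triangleq A-BK$.

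The first step is to show that $I-\bar A$ is invertible and to write its inverse as a Neumann series. By Definition~\ref{defn:strong_stab_K} we have $\bar A=H^{-1}JH$ with $\|J\|\le 1-\gamma<1$. Hence
\[
\bar A^k = H^{-1}J^kH,
\qquad
\|\bar A^k\|\le \|H^{-1}\|\,\|J\|^k\,\|H\|\le \kappa(1-\gamma)^k .
\]
So the spectral radius of $\bar A$ is at most $1-\gamma<1$. This gives
\[
(I-\bar A)^{-1}=\sum_{k\ge 0}\bar A^k,
\]
with the series converging absolutely.

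The second step is to bound this inverse and then $z$. Summing the geometric series gives
\[
\|(I-\bar A)^{-1}\|\le \sum_{k\ge0}\kappa(1-\gamma)^k=\frac{\kappa}{\gamma}.
\]
Therefore
\[
\|z\|=\|(I-\bar A)^{-1}Bv\|\le \frac{\kappa}{\gamma}\|B\|\,\|v\|\le \frac{\kappa}{\gamma}\|B\|U,
\]
where the last inequality uses Assumption~\ref{ass:set_bound}.

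The final step converts to the $\infty$-norm. Since $\|z\|_\infty\le\|z\|_2$, the claimed bound follows, and it also holds in the $2$-norm, which is what $\bar D$ in Theorem~\ref{thm:regret-bd-batch-algo} uses.

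There is no real obstacle here. The only point needing care is that invertibility of $I-\bar A$ and the constant $\kappa/\gamma$ come from the similarity decomposition, not from $\|\bar A\|<1$, which need not hold.
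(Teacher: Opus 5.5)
Your proof is correct and is essentially the argument the paper has in mind when it says the bound ``follows directly from Definition~\ref{def:ss-set-cl}''. That argument writes $z=(I-A+BK)^{-1}Bv=\sum_{k\ge0}(A-BK)^kBv$ and sums $\|(A-BK)^k\|\le\kappa(1-\gamma)^k$ to obtain the factor $\kappa/\gamma$. Keep your remark that this actually gives the stronger $2$-norm bound, since $\|z^n\|\le\bar D$ in the $2$-norm is what the proof of Theorem~\ref{thm:regret-bd-batch-algo} uses.
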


This result follows directly from Definition \ref{def:ss-set-cl}. 
Next, we have a bound on the distance between points picked according in Line \ref{line:update_z}. 
\begin{lem}[(Lemma 11, \cite{hebbar_responding_2024})] \label{lem:generalized-stability-bd}
    Let $n,n+h \in \{1,\dots,N_H\}$ for some $h>0$ then we have
    \begin{flalign} 
        \bE[\|z^{n+h}(\sigma)-z^{n}(\sigma)\|_1]  \leq    50\eta h L N^2 \bar D + 6 \frac{\epsilon}{hLH} \label{eq:stab_bound} 
    \end{flalign}
    where the expectation is taken over the distribution of $\sigma$.
\end{lem}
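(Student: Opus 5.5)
The plan is to recast the $h$-batch gap as the one-step stability of a single FTPL instance, and then run the coordinate-wise monotonicity argument of Suggala and Netrapalli \cite{suggala2020online}, adapted to the $\epsilon$-approximate oracle of Definition~\ref{def:approx_oracle}. Write $G_n(z)\triangleq\sum_{m=1}^{n-1}F_m(z)$, so that Line~\ref{line:update_z} reads $z^n(\sigma)=\bO_\epsilon\bigl(G_n(z)+\langle\sigma,z\rangle,\cZ\bigr)$. Set $\Delta\triangleq G_{n+h}-G_n=\sum_{m=n}^{n+h-1}F_m$. Then $z^{n+h}$ is an approximate minimizer of $G_n+\Delta+\langle\sigma,\cdot\rangle$, and we are comparing the oracle outputs before and after a single increment $\Delta$ consisting of $hH$ stage costs.

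First I would record the regularity of $\Delta$ on $\cZ$. By Lemma~\ref{lem:Z-bounded}, every $z\in\cZ$ has $\|z\|_\infty\le\bar D$ and hence $\|z\|\le\sqrt N\bar D$. Assumption~\ref{ass:grad_bound} then bounds $\|\nabla f_t\|$ on $\cZ$ by a multiple of $L\bar D$, giving
\[
|\Delta(z)-\Delta(z')|\le \alpha\,\|z-z'\|_1,\qquad \alpha\triangleq c\,hHL\bar D ,
\]
for a modest dimensional constant $c$ (possibly carrying a $\sqrt N$). I would fix $c$ by tracking norms carefully so that the factors of $N$ come out as in \eqref{eq:stab_bound}.

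Next comes the monotonicity step, carried out for each coordinate $i$. Let $e_i$ be the $i$-th unit vector and $\sigma'=\sigma+\beta e_i$ with $\beta$ a multiple of $\alpha$. Comparing the approximate optimality of $z^n(\sigma)$ and $z^{n+h}(\sigma')$ against each other's objectives, and adding the resulting inequalities, should give
\[
z^{n+h}_i(\sigma') \le z^{n}_i(\sigma) + \tfrac{2\epsilon}{\beta}, \qquad z^{n}_i(\sigma') \le z^{n+h}_i(\sigma)+ \tfrac{2\epsilon}{\beta}
\]
and a symmetric lower bound: shifting $\sigma_i$ by $\beta$ dominates the $\ell_1$-Lipschitz effect of $\Delta$, at the price of an $\epsilon/\beta$ slack. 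The slack is where the term $6\epsilon/(hLH)$ comes from, once $\beta$ is taken proportional to $hHL$. I would then condition on $\sigma_{-i}$ and use that $\sigma_i\sim\exp(\eta)$. The density ratio between $\sigma_i$ and $\sigma_i+\beta$ is $e^{-\eta\beta}\ge 1-\eta\beta$, so $\bE|z^{n+h}_i-z^n_i|$ is at most $\eta\beta$ times the coordinate range $2\bar D$, plus the slack term. Summing over the $N$ coordinates yields an $\ell_1$ bound of order $\eta N\alpha\bar D+N\epsilon/\beta$. Tracking constants (with the extra $N$ coming from $c$) should reproduce $50\eta hLN^2\bar D$, since $H$ cancels once the $\exp(\eta)$ perturbation is normalized.

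The main obstacle is the monotonicity inequality with an inexact oracle. In the exact case, Suggala and Netrapalli obtain a clean monotonicity, but here the oracle output need not be a measurable or even consistent selection as $\sigma$ varies. I would first try the event-splitting approach: on the event that $|z^{n+h}_i-z^n_i|\le\epsilon/\beta$ the contribution is directly small, and otherwise the $\epsilon$ slack is absorbed into the $\beta$-shift inequality. If the constants do not match \eqref{eq:stab_bound} exactly, I would accept a slightly worse constant, since only the $\cO(\eta h)+\cO(\epsilon/h)$ scaling matters for Theorem~\ref{thm:regret-bd-batch-algo}.
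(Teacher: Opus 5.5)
There is a genuine gap, and it sits at the step you treat as routine. If you add the approximate-optimality inequalities of $a=z^n(\sigma)$ and $b=z^{n+h}(\sigma')$, with $\sigma'=\sigma+\beta e_i$, against each other's objectives, you obtain
\[
\beta\,(b_i-a_i)\le\Delta(a)-\Delta(b)+2\epsilon\le\alpha\,\|a-b\|_1+2\epsilon .
\]
The $\ell_1$ term does not disappear. A shift of $\sigma$ in one coordinate cannot dominate an increment that is Lipschitz in all coordinates, whatever the constant in $\beta=C\alpha$. Here is a counterexample. Take $\epsilon=0$, $G_n\equiv0$, the feasible set $\{0,q\}$ with $q=\delta e_1+e_2$ and $0<\delta<1/C$, and the convex, $\ell_1$-Lipschitz increment $\Delta(z)=\alpha\|z-q\|_1$. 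Since $\sigma>0$, you get $a=0$. But $b=q$ whenever $\sigma_1\delta+\sigma_2<\alpha\bigl(1-(C-1)\delta\bigr)$, an event of positive probability, and then $b_1=\delta>a_1$. So your inequality does not follow from approximate optimality and Lipschitzness, and your second inequality fails the same way. Worse, $\|a-b\|_1$ compares iterates at two different perturbations, so nothing later in your argument can absorb it. Two related misdiagnoses follow from this. First, the inexact oracle is not the real obstacle; it only costs $\cO(\epsilon/\beta)$ slacks. Second, the second factor of $N$ cannot come from norm bookkeeping, since turning Assumption~\ref{ass:grad_bound} into an $\ell_1$-Lipschitz constant on $\cZ$ costs at most $\sqrt N$.

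The missing idea is the event split of Suggala and Netrapalli~\cite{suggala2020online}, and that split is where $N^2$ actually comes from. Your fallback splits on $|z^{n+h}_i-z^n_i|$ versus $\epsilon/\beta$, which does not touch this issue. Instead, first route the Lipschitz term through the \emph{unshifted} pair with a three-point comparison. Use approximate optimality of $z^{n+h}(\sigma')$ against $z^n(\sigma)$, of $z^{n+h}(\sigma)$ against $z^{n+h}(\sigma')$, and of $z^n(\sigma)$ against $z^{n+h}(\sigma)$. This gives
\[
z^{n+h}_i(\sigma')\le z^n_i(\sigma)+\frac{\alpha}{\beta}\bigl\|z^n(\sigma)-z^{n+h}(\sigma)\bigr\|_1+\frac{3\epsilon}{\beta},
\]
and the symmetric bound with the roles of $n$ and $n+h$ exchanged. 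This is useful only on the event $E_i$ that $|z^n_i(\sigma)-z^{n+h}_i(\sigma)|\ge\frac{1}{10N}\|z^n(\sigma)-z^{n+h}(\sigma)\|_1$. On $E_i$, take $\beta=100N\alpha$ and combine with approximate monotonicity, $z_i(\sigma')\le z_i(\sigma)+2\epsilon/\beta$. You get
$\max\{z^n_i(\sigma'),z^{n+h}_i(\sigma')\}\le\min\{z^n_i(\sigma),z^{n+h}_i(\sigma)\}+\frac{1}{10}|z^n_i(\sigma)-z^{n+h}_i(\sigma)|+\frac{3\epsilon}{\beta}$.
This sandwich is what your exponential-density step actually needs. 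Since $\beta\propto N\alpha$, summing $\eta\beta\bar D$ over the $N$ coordinates produces the $N^2$. Coordinates where $E_i$ fails each carry less than a $\frac{1}{10N}$ share of the $\ell_1$ gap, hence less than a tenth of $\bE\|z^n-z^{n+h}\|_1$ in total, which you absorb into the left-hand side.

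Finally, $H$ does not cancel. $\alpha$ scales like $hHL\bar D$ and $\beta$ must scale with it. The paper's own use of this lemma when bounding term (B) carries the factor $H$, so the $H$-free constant in the displayed statement is not the right target.
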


Finally, we state the following lemma which relates the realized system state to the sequence of past target states. 
\begin{lem}[Evolution of state] \label{lem:state-evolution}
Let $n_t$ refer to the batch that time step $t$ belongs to, and let $z_t \triangleq z^{n_t}$, $\bar{A}_t \triangleq A - B K^{n_t}$.
Then, the difference between the state $x_t$ of the system and the corresponding target state $z_t$ can be described as
\begin{equation}\label{eq:state-unrolled}
\begin{split}
x_{t+1}-z_{t+1} & = \Phi(t,1)\,(x_1-z_{t+1})\\
  \;+\; \sum_{\tau=1}^{t} & \Phi(t,\tau+1)\,\bigl(I-\bar A_\tau\bigr)\,(z_\tau-z_{t+1})
\end{split}
\end{equation}
where $\Phi(t,s) := \bar A_t \cdots \bar A_s$ when $(t\ge s)$, $\Phi(t,t+1):=I$. 
\end{lem}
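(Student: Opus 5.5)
The statement to prove is Lemma~\ref{lem:state-evolution}. The plan is a direct induction on $t$ that uses only the recursion $x_{t+1} = \bar A_t x_t + B v^{n_t}$ and the steady-state identity $z_t = \bar A_t z_t + B v^{n_t}$. The latter holds because Line~\ref{lin:update_Ku} picks $(K^n,v^n)$ with $(I-\bar A_t) z^n = B v^n$ for every $t$ in batch $n$.

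The first step is to subtract the two identities. This gives the one-step error recursion $x_{t+1} - z_t = \bar A_t (x_t - z_t)$. The offset $v^{n_t}$ cancels, even though the gain switches across batches, because $z_t$ and $\bar A_t$ are indexed by the same batch.

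Rather than carry the moving target through the induction, I would first prove the cleaner identity
\begin{equation*}
x_{t+1} = \Phi(t,1)\,x_1 + \sum_{\tau=1}^{t} \Phi(t,\tau+1)\,(I-\bar A_\tau)\,z_\tau .
\end{equation*}
This follows from $x_{\tau+1} = \bar A_\tau x_\tau + (I-\bar A_\tau) z_\tau$, which is the previous step rearranged. The induction is routine: for $t=1$ it reads $x_2 = \bar A_1 x_1 + (I-\bar A_1) z_1$. For the inductive step, multiply the identity at $t-1$ by $\bar A_t$ and add $(I-\bar A_t) z_t$, using $\bar A_t \Phi(t-1,s) = \Phi(t,s)$ and $\Phi(t,t+1) = I$.

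The second ingredient is the telescoping identity
\begin{equation*}
\sum_{\tau=1}^{t} \Phi(t,\tau+1)(I-\bar A_\tau) = \sum_{\tau=1}^{t} \bigl(\Phi(t,\tau+1) - \Phi(t,\tau)\bigr) = I - \Phi(t,1).
\end{equation*}
Multiplying it on the right by $z_{t+1}$ and rearranging gives $z_{t+1} = \Phi(t,1) z_{t+1} + \sum_{\tau=1}^{t} \Phi(t,\tau+1)(I-\bar A_\tau) z_{t+1}$.

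Subtracting this from the expression for $x_{t+1}$ yields exactly \eqref{eq:state-unrolled}.

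None of the steps is a genuine obstacle. The only points needing care are the indexing conventions for $\Phi$, in particular $\Phi(t,t+1) = I$ and $\Phi(t,\tau) = \Phi(t,\tau+1)\bar A_\tau$, and checking that the steady-state relation holds with matching batch index at every $\tau$. The disturbance-free assumption (Assumption~\ref{ass:no-disturbance}) is used so that no additional forcing term appears.
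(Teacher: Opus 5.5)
Your proof is correct and complete. The steady-state identity from Line~\ref{lin:update_Ku} with matching batch index gives $x_{\tau+1} = \bar A_\tau x_\tau + (I-\bar A_\tau) z_\tau$, and the unrolled form and the telescoping identity $\sum_{\tau=1}^{t}\Phi(t,\tau+1)(I-\bar A_\tau) = I-\Phi(t,1)$ both check out under the stated conventions for $\Phi$. The paper omits this proof, but the shape of \eqref{eq:state-unrolled} is exactly what your unroll-then-subtract route produces, so it is essentially the intended derivation. Your telescoping step also shows the identity holds with $z_{t+1}$ replaced by any fixed vector, which handles the edge case $t=T$, where $z_{T+1}$ may be undefined.
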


\subsection{Proof of Theorem \ref{thm:regret-bd-batch-algo}}
\label{sec:proof}
We will pick up our proof from \eqref{eq:batched-regret-decomp} in the proof sketch where we split our regret into two terms $(A)$  and $(B)$. Recall that we are interested in bounding expected regret, so as we move forward, we will be bounding the expected values of these terms. Bounding the expected value of term $(A)$ follows the standard regret analysis approach for an FTPL algorithm operating on non-convex costs \cite{suggala2020online}, and is omitted in the interest of space.  It can be shown that 
\begin{flalign*}
     \bE[(A)]  \leq   50\eta HT L^2N^2 \bar{D} +  \bar{D} \frac{N}{\eta} + \epsilon (7N_H +1) 
\end{flalign*}


\textbf{Bounding term $(B)$:} By leveraging Jensen's inequality, Lemmas \ref{lem:bounded_x} and \ref{lem:Z-bounded}, and  Assumption \ref{ass:grad_bound} we get
\begin{equation} \label{eq:termB-expansion}
    \bE[(B)] \leq \sum_{n=1}^{N_H} \sum_{h=1}^H L\,\bE\!\left[\|x_{(n-1)H+h} - z^n\|\right].
\end{equation}

Applying Lemma~\ref{lem:state-evolution} with
$t=(n-1)H+h-1$, we obtain
\begin{align*}
& x_{(n-1)H+h}-z^n
= \\
& \Phi((n-1)H+h-1,1)\,(x_1-z^n) + \! \!\!\!\!
\sum_{\tau=1}^{(n-1)H+h-1}
 \! \!\!\! \!\\
& \Phi((n-1)H+h-1,\tau+1)\,(I-\bar A_\tau)\,(z_\tau-z^n).
\end{align*}
Taking norms, expectations, and using the triangle inequality yields
\begin{align*}
& \bE\!\left[\|x_{(n-1)H+h}-z^n\|\right] \\
&\le
\bE\!\left[\|\Phi((n-1)H+h-1,1)\|\,\|x_1-z^n\|\right] +  \!\! \!\!\!\sum_{\tau=1}^{(n-1)H+h-1}\! \!\!\!\!\!  \\
 &\bE\!\left[
\|\Phi((n-1)H+h-1,\tau+1)\|\,\|I-\bar A_\tau\|\,\|z_\tau-z^n\|
\right].
\end{align*}
Now, since $z_\tau=z^n$ for all $\tau=(n-1)H+1,\dots,(n-1)H+h-1$, the terms within the current batch vanish. Re-indexing the remaining terms batch-wise gives us 
\begin{align*}
 & \bE\!\left[\|x_{(n-1)H+h}-z^n\|\right] \le \\
 & \bE\!\left[\|\Phi((n-1)H+h-1,1)\|\,\|x_1-z^n\|\right] + \sum_{m=1}^{n-1}\sum_{h'=1}^{H} \\
 & \bE\!\big[
\|\Phi((n-1)H+h-1,(m-1)H+h'+1)\|\,\\ 
 & \qquad \|I-\underline{A}_m\|\,\|z^m-z^n\|
\big]
\end{align*}
where we used the notation 
    $\bar A_{(n-1) H + h}  = (A-B K^n) \triangleq \underline{A}_n \quad $ for all $  h \in [H], n\in [N_H].$ 
Then, by using the fact that
\begin{align}
    &\Phi\big((n-1)H+h,(m-1)H+h'\big) \notag \\ 
    & = \begin{cases}
        \underline{A}_n^{\,h-h'+1}, & n=m,\ h\ge h',\\[2mm]
        \underline{A}_n^{\,h}\left(\prod_{j=m+1}^{n-1}\underline{A}_j^{\,H}\right)\underline A_m^{\,H-h'+1}, & n>m,
    \end{cases} \label{eq:BatchedStateTransitionDiff} 
\end{align}
 the transition matrix can be decomposed as
\(
\Phi((n-1)H+h-1,(m-1)H+h'+1)
=
\underline A_n^{\,h-1}
\left(\prod_{j=m+1}^{n-1}\underline A_j^{\,H}\right)
\underline A_m^{\,H-h'}.
\)
Hence,
\begin{align*}
& \bE\left[\|x_{(n-1)H+h}-z^n\|\right] \\
&\le
\bE\left[
\left\|
\underline A_n^{h-1}\prod_{j=1}^{n-1}\underline A_j^{,H}
\right\|
\||x_1-z^n\||
\right] + \sum_{m=1}^{n-1}\sum_{h'=1}^{H} \\
& 
\bE\!\Bigg[
\|\underline A_n^{\,h-1}\|
\left\|\prod_{j=m+1}^{n-1}\underline A_j^{\,H}\right\|
\|\underline A_m^{\,H-h'}\|
\|I-\underline A_m\|\,\|z^m-z^n\|
\Bigg] \\
&\qquad\|I-\underline A_m\|\,\|z^m-z^n\| \Bigg] \\
&  \stackrel{(a)}{\le} \kappa^n (1-\gamma)^{(n-1)H+h-1} \bE\left[  
\||x_1-z^n\||
\right] +\sum_{m=1}^{n-1}\sum_{h'=1}^{H} \\
 & 
\kappa^{n-m+1} (1-\gamma)^{(n-m)H+h-h'-1} \bE\Bigg[
\|I-\underline A_m\|\,\|z^m-z^n\|
\Bigg]
\end{align*}
where inequality $(a)$ follows from strong stability of the closed loop matrices. Noting that $I-\underline A_m$ can be expanded as $H_m (I-J_m) H_m^{-1}$ for some $H_m$ and $J_m$, it follows that 
\(\|I-\underline A_m\| \leq \kappa \|I-J_m\| \leq 2 \kappa.\)
It is also easy to obtain the following bound
\(\sum_{h'=1}^{H} (1-\gamma)^{-h'-1} \leq \frac{1}{\gamma(1-\gamma)^{H+1}}.\)
Using these two bounds gives us,
\begin{flalign*}
    \bE\left[\|x_{(n-1)H+h}-z^n\|\right] \leq & (1-\gamma)^{h-1-H} \bigg( \nu^{n}  \bE\left[ \|x_1-z^n\| \right] \\
    +&   \frac{2\kappa^2}{\gamma} \sum_{m=1}^{n-1}
\nu^{n-m} \bE\left[\|z^m-z^n\|
\right] \bigg)
\end{flalign*}
where we define the quantity $\nu = \kappa (1-\gamma)^H$. Next, by invoking Lemma \ref{lem:generalized-stability-bd} on the norm of $\|z^m-z^n\|$ we have
\begin{flalign*}
    & \bE\left[\|x_{(n-1)H+h}-z^n\|\right] \leq  \\
    & \frac{1}{(1-\gamma)^{H+1-h}} \bigg( \nu^{n}  \bE\left[ \|x_1-z^n\| \right] + \\
    &  \frac{2\kappa^2}{\gamma} \sum_{m=1}^{n-1}
\nu^{n-m} \left( 50\eta (n-m) HL N^2 Z +  \frac{6\epsilon}{(n-m) H L} \right)\bigg). 
\end{flalign*}
Under the assumption that $\nu<1$ the second term includes two summation series which can be bound as follows
\(\sum_{m=1}^{n-1} \nu^{n-m}(n-m) \leq  \frac{\nu}{(1-\nu)^2}  \text{ and } \sum_{m=1}^{n-1} \frac{\nu^{n-m}}{(n-m)}  \leq -\ln{(1-\nu)}.\) Using these two summation bounds we get 
\begin{flalign*}
    & \bE\left[\|x_{(n-1)H+h}-z^n\|\right] \\
    & \leq (1-\gamma)^{h-1-H} \left( \nu^{n}  \bE\bigg[ \|x_1-z^n\| \right]  \\
    & \quad + \frac{2\kappa^2}{\gamma} \left( \eta \frac{50\nu HL N^2 Z}{(1-\nu)^2}  - \epsilon  \frac{6\ln{(1-\nu)}}{H L} \right)\bigg). 
\end{flalign*}

Noting $\|x_1 - z^n\| \leq \|x_1\| + \|z^n\| \leq 2\bar D$, substituting this bound back into to \eqref{eq:termB-expansion} and by using the fact that $(1-\gamma)^{-H} \nu = \kappa$, we have 
\begin{flalign*}
     & \bE[(B)] 
     \leq   L \sum_{n=1}^{N_H} \sum_{h=1}^H (1-\gamma)^{h-1} \\
     & \left( 2 \kappa \nu^{n-1}  \bar D + \frac{2\kappa^3}{\gamma} \left( \eta \frac{50 HL N^2 Z}{(1-\nu)^2}  - \epsilon  \frac{6\ln{(1-\nu)}}{H \nu L} \right)\right)  \stackrel{(a)}{\leq} \\
     & \frac{L}{\gamma} \sum_{n=1}^{N_H} \left( 2\kappa \nu^{n-1} \bar D + \frac{2\kappa^3}{\gamma} \left( \eta \frac{50 HL N^2 Z}{(1-\nu)^2}  - \epsilon  \frac{6\ln{(1-\nu)}}{H \nu L} \right)\right)  \\
     & \stackrel{(b)}{\leq}  \frac{L}{\gamma}  \left( \frac{2\kappa\bar D}{1-\nu}  + \frac{2\kappa^3}{\gamma} \left( \eta \frac{50 TL N^2 Z}{(1-\nu)^2}  - \epsilon  \frac{6N_H\ln{(1-\nu)}}{H \nu L} \right)\right)  
\end{flalign*}
where in inequality $(a)$ we used $\sum_{h=1}^H (1-\gamma)^{h-1} \leq \sum_{h=1}^\infty (1-\gamma)^{h-1} = \frac{1}{\gamma}$.  In inequality $(b)$ we used a similar bound for terms with $\nu$, and the fact that $N_HH = T$. 

\textbf{Putting terms $(A)$ and $(B)$ together:} 
By combining the bound for term $(A)$ and $(B)$ and re-arranging we get
\begin{flalign*}
    & \bE[\cR_{\cZ}(T)] \leq 50 \eta N^2TL^2 \bar D \left(H + \frac{2\kappa^3}{\gamma^2 (1-\nu)^2}\right) + \bar D \frac{N}{\eta} + \\
    & \frac{2\kappa L \bar D }{\gamma (1-\nu)} +  \epsilon\left( 7N_H + 1 - \frac{12N_
    H\kappa^3 \ln(1-\nu)}{H\gamma^2 \nu}\right)
\end{flalign*}
Our choice of $H$ in the theorem statement gives us $\nu=\frac{1}{2}$. Substituting this, along with the stated choices of $\epsilon$ and $\eta$  completes the proof. \hfill $\square$
 
%
\subsection*{Acknowledgements}
{\footnotesize An AI system (ChatGPT) was used to support the generation of simulation results presented in Section \ref{sec:simulations}. In particular, it helped in the creation of various helper functions used to implement Algorithm \ref{alg:batching_based}.  }

\bibliographystyle{IEEEtran}
\bibliography{onlineLinCont}

\end{document}